\documentclass[runningheads]{llncs}
\usepackage[english]{babel}
\usepackage{hyperref}       
\usepackage{url}            
\usepackage{booktabs}       
\usepackage{amsfonts}       
\usepackage{nicefrac}       
\usepackage{microtype}      
\usepackage{lipsum}
\usepackage{mathtools}

\usepackage{amsmath,amssymb}
\usepackage{algorithm,algorithmic}
\usepackage{pifont}
\usepackage{cases}
\usepackage{subcaption,graphicx}
\usepackage{stackengine}    
\usepackage{wrapfig}

\newtheorem{assumption}[theorem]{Assumption}

\newcommand{\eps}{\varepsilon}

\newcommand{\one}{\mathbf{1}}

\DeclareMathOperator*{\argmin}{arg\,min}

\DeclareMathOperator{\spn}{span}
\DeclareMathOperator{\col}{col}
\DeclareMathOperator{\kernel}{Ker}
\DeclareMathOperator{\range}{Range}

\DeclareMathOperator{\rank}{rank}

\newcommand{\R}{\mathbb{R}}

\newcommand{\mA}{{\bf A}}
\newcommand{\mB}{{\bf B}}
\newcommand{\mC}{{\bf C}}

\newcommand{\mE}{{\bf E}}

\newcommand{\mI}{{\bf I}}

\newcommand{\mW}{{\bf W}}

\newcommand{\cE}{{\mathcal{E}}}

\newcommand{\cG}{{\mathcal{G}}}

\newcommand{\cV}{{\mathcal{V}}}

\newcommand{\bd}{{\bf d}}

\newcommand{\bp}{{\bf p}}
\newcommand{\bq}{{\bf q}}

\newcommand{\bt}{{\bf t}}
\newcommand{\bu}{{\bf u}}
\newcommand{\bv}{{\bf v}}

\newcommand{\bx}{{\bf x}}
\newcommand{\by}{{\bf y}}
\newcommand{\bz}{{\bf z}}

\newcommand{\ds}{\displaystyle}
\newcommand{\norm}[1]{\left\| #1 \right\|}

\newcommand{\angles}[1]{\left\langle #1 \right\rangle}
\newcommand{\cbraces}[1]{\left( #1 \right)}
\newcommand{\sbraces}[1]{\left[ #1 \right]}
\newcommand{\braces}[1]{\left\{ #1 \right\}}

\usepackage[colorinlistoftodos,bordercolor=orange,backgroundcolor=orange!20,linecolor=orange,textsize=scriptsize]{todonotes}


\def\R{\mathbb R}

\def\one{{\mathbf 1}}

\def\<#1,#2>{\langle #1,#2\rangle}

\title{Decentralized Strongly-Convex Optimization with Affine Constraints: Primal and Dual Approaches\thanks{The work of D. Yarmoshik in Sections \ref{sec:introduction}, \ref{sec:locally_dual_approach}, \ref{sec:numerical_exp} was supported by the program “Leading Scientific Schools” (grant no. NSh-775.2022.1.1). The work of A. Rogozin and A. Gasnikov in Sections \ref{sec:preliminaries}--\ref{sec:globally_dual_approach} was supported by Russian Science Foundation (project No. 21-71- 30005).}}
\titlerunning{Decentralized Strongly-Convex Optimization with Affine Constraints}
\author{
    Alexander Rogozin
    \inst{1}
    \and
    Demyan Yarmoshik
    \inst{1}
    \and
    Ksenia Kopylova
    \inst{2}
    \and
    Alexander Gasnikov
    \inst{1, 3, 4}
}

\authorrunning{Rogozin et al.}
%
\institute{
    Moscow Institute of Physics and Technology, Moscow, Russia 
    \and
    Saint Petersburg State University, Saint Petersburg, Russia
    \and
    Caucasus Mathematical Center, Adyghe State University, Maikop, Russia
    \and
    IITP RAS, Moscow, Russia
}

\begin{document}

\maketitle
\begin{abstract}
	Decentralized optimization is a common paradigm used in distributed signal processing and sensing as well as privacy-preserving and large-scale machine learning. It is assumed that several computational entities locally hold objective functions and are connected by a network. The agents aim to commonly minimize the sum of the local objectives subject by making gradient updates and exchanging information with their immediate neighbors. Theory of decentralized optimization is pretty well-developed in the literature. In particular, it includes lower bounds and optimal algorithms. In this paper, we assume that along with an objective, each node also holds affine constraints. We discuss several primal and dual approaches to decentralized optimization problem with affine constraints.
	\keywords{distributed optimization, convex optimization, constrained optimization}
\end{abstract}

\section{Introduction}\label{sec:introduction}

Many distributed systems such as distributed sensor networks, systems for power flow control and large-scale architectures for machine learning use decentralized optimization as a basic mathematical tool. Several applications such as power systems control \cite{molzahn2017survey,Patari2021} lead to problems where the agents locally hold optimization objectives and aim to cooperatively minimize the sum of the objectives. Moreover, every node locally holds affine constraints for its decision variable.

Decentralized optimization without affine constraints can be called a well-examined area of research. It is known that the performance of optimization algorithms executed over strongly-convex smooth objectives is lower bounded by a multiple of the graph condition number and objective condition number (up to a logarithmic factor) \cite{scaman2017optimal}. Both primal \cite{kovalev2020optimal} and dual \cite{scaman2017optimal} algorithms that reach the lower bounds have been proposed. The algorithms are based on reformulating network communication constraints as affine constraints via a communication matrix associated with the network (i.e. Laplacian matrix). Introduction of affine constraints at the nodes leads to new classes of algorithms that can be divided into two main types. The first type are consensus-based methods that can be either primal or dual \cite{dc2,dc3,pdc1,pdc2,necoara2014distributed,NECOARA2015209,NECOARA2011756}. The second type are ADMM-based methods \cite{erseghe2014distributed,rostampour2019distributed,admm1,admm2}.
Let us briefly review some of the closely related papers.

The paper \cite{necoara2014distributed} is dedicated to constrained distributed optimization and consider only separable objective functions (each agent has its own independent variable).
Moreover, affine constraints are supposed to be network-compatiable (constraint matrix can have a non-zero element on position $(i, j)$ only if there is an edge in communication graph between agents $i$ and $j$).
We do not impose such limitations: in our case each term in the objective functions depends on the same shared variable (formulation in \cite{necoara2014distributed} is obviously a special case of this) and matrix of constraints can have arbitrary structure.

In \cite{NECOARA2011756} the authors present various formulations of distributed optimization problems with different types of interconnections between constraints and objectives, including the case, when the objective (cost) cannot be represented as sum of cost functions of each agent. However, their algorithms for  problems with coupled affine constraints require to solve a ``master problem'' on central node at each iteration and thus are not decentralized.

The authors of \cite{wang2022distributed} consider multi-cluster distributed problem formulation which is a generalization of multi-agent approach.
In multi-cluster case agents within one cluster have the same decision variable while different clusters corresponds to different decision variables. 
All variables are subject to a coupled affine constraint.
By incorporating consensus constraints into dual problem with Lagrangian multipliers the author comes to solving a saddle point problem and prove asymptotic $O(1/N)$ ergodic convergence rate for their method.
Dependency of convergence rate on problem parameters in saddle point approach was  studied in \cite{yarmoshik2022decentralized}.

Our paper studies the application of different techniques to decentralized problems with affine constraints.
We obtain linear convergence rates with (explicitly specified) accelerated dependencies on function properties, constraint matrix spectrum and communication graph properties.

The paper outline is as follows. In Section \ref{sec:primal_approach} we discuss a primal approach, that is based on reformulation the initial distributed problem as a saddle-point problem and applying algorithm of paper \cite{kovalev2021accelerated} afterwards. In Section \ref{sec:globally_dual_approach}, we describe a method that allows to incorporate both affine and communication constraints to the dual function. We refer the approach in Section \ref{sec:globally_dual_approach} as a globally dual approach. Finally, in Section \ref{sec:locally_dual_approach} we describe a slightly different dual approach that firstly takes dual functions locally at the nodes and incorporates consensus constraints afterwards. We refer to the latter method as a locally dual approach.


\section{Preliminaries}\label{sec:preliminaries}

Let $\col(x_1, \ldots, x_m)$ define a column vector of $x_1, \ldots, x_m\in\R^d$, i.e. $\col(x_1, \ldots, x_m) = [x_1^\top \ldots x_m^\top]^\top$. For matrices $P$ and $Q$, their Kronecker product is defined as $P\otimes Q$. Identity matrix of size $p\times p$ is denoted $\mI_p$. Moreover, given a symmetric positive semi-definite matrix, we denote $\lambda_{\max}(\cdot),~ \lambda_{\min}(\cdot),~ \lambda_{\min}^+(\cdot)$ its maximal, minimal and minimal nonzero eigenvalues, respectively. We also let $\sigma_{\max}(\cdot),~ \sigma_{\min}(\cdot)$ and $\sigma_{\min}^+(\cdot)$ be the maximal, minimal and minimal nonzero singular values of a matrix, respectively.

In the forthcoming analysis, we will need the following basic lemma concerning Kronecker product properties.
\begin{lemma}\label{lem:kronecker_diagonalize}
    Given two matrices $P$ and $Q$ such that $\sigma_{\min}(P) = \sigma_{\min}(Q) = 0$, we have
    \begin{align*}
        \sigma_{\max}(P\otimes\mI + \mI\otimes Q) &= \sigma_{\max}(P) + \sigma_{\max}(Q), \\
        \sigma_{\min}^+(P\otimes\mI + \mI\otimes Q) &= \min\braces{\sigma_{\min}^+(P), \sigma_{\min}^+(Q)}
    \end{align*}
\end{lemma}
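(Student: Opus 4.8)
The plan is to reduce everything to the spectrum of the Kronecker sum $M := P\otimes\mI + \mI\otimes Q$ by exploiting the fact that $P\otimes\mI$ and $\mI\otimes Q$ commute. First I would record the assumption that is implicitly needed for the statement to hold, namely that $P$ and $Q$ are symmetric positive semidefinite. This really is needed: for general (non-normal) matrices the additivity of $\sigma_{\max}$ already fails on simple nilpotent examples, so some such hypothesis is unavoidable. Under positive semidefiniteness the singular values coincide with the eigenvalues, and $M$ is itself symmetric positive semidefinite, so it suffices to analyze the eigenvalues of $M$.

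The key structural step is simultaneous diagonalization. Writing spectral decompositions $P = U_P\Lambda_P U_P^\top$ and $Q = U_Q\Lambda_Q U_Q^\top$ with $U_P,U_Q$ orthogonal, the orthogonal matrix $U := U_P\otimes U_Q$ satisfies $U^\top(P\otimes\mI)U = \Lambda_P\otimes\mI$ and $U^\top(\mI\otimes Q)U = \mI\otimes\Lambda_Q$ by the mixed-product property of the Kronecker product. Hence $U^\top M U$ is diagonal with entries $\lambda_i(P)+\lambda_j(Q)$ ranging over all index pairs $(i,j)$, so the spectrum of $M$ is exactly $\braces{\lambda_i(P)+\lambda_j(Q)}$, and every such entry is nonnegative because $P,Q\succeq 0$.

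From this description the first identity is immediate: the largest entry is $\lambda_{\max}(P)+\lambda_{\max}(Q)$, which equals $\sigma_{\max}(P)+\sigma_{\max}(Q)$ by positive semidefiniteness, and this step does not even use the hypothesis $\sigma_{\min}(P)=\sigma_{\min}(Q)=0$. For the second identity I would invoke that hypothesis, which says precisely that both $P$ and $Q$ have a zero eigenvalue. A sum $\lambda_i(P)+\lambda_j(Q)$ is strictly positive only if at least one summand is positive, and since all eigenvalues are nonnegative such a sum is at least $\min\braces{\sigma_{\min}^+(P),\sigma_{\min}^+(Q)}$; conversely this value is attained, since pairing $\sigma_{\min}^+(P)$ with the zero eigenvalue of $Q$ (or $\sigma_{\min}^+(Q)$ with the zero eigenvalue of $P$) produces an actual eigenvalue of $M$. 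This yields $\sigma_{\min}^+(M)=\min\braces{\sigma_{\min}^+(P),\sigma_{\min}^+(Q)}$.

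The only genuinely delicate point is this second identity, and in particular the role of the hypothesis. The existence of a zero eigenvalue in each factor is exactly what guarantees that the values $\sigma_{\min}^+(P)$ and $\sigma_{\min}^+(Q)$ appear in the spectrum of $M$, rather than only the inflated combinations $\sigma_{\min}^+(P)+\sigma_{\min}(Q)$ with $\sigma_{\min}(Q)>0$. I would therefore make that dependence explicit in the writeup, since without it the claimed minimum need not be achieved and the formula would instead read as a sum of a minimal nonzero eigenvalue of one factor with the minimal eigenvalue of the other.
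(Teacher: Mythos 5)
Your proof is correct, and at heart it follows the same route as the paper's: conjugate the Kronecker sum by Kronecker products of the factors' orthogonal matrices, read off the spectrum as $\braces{\lambda_i(P)+\lambda_j(Q)}$, and extract the extremal values. But your treatment is genuinely different in one important respect, and it is in fact more correct than the paper's own proof. The paper works with general SVDs $P = U_P\Sigma_P V_P^\top$, $Q = U_Q\Sigma_Q V_Q^\top$ and asserts the identity $(U_P^\top\otimes U_Q^\top)(P\otimes\mI + \mI\otimes Q)(V_P\otimes V_Q) = \Sigma_P\otimes\mI + \mI\otimes\Sigma_Q$; by the mixed-product rule the left-hand side actually equals $\Sigma_P\otimes(U_Q^\top V_Q) + (U_P^\top V_P)\otimes\Sigma_Q$, so the identity holds only when $U_P=V_P$ and $U_Q=V_Q$, i.e.\ when $P$ and $Q$ are symmetric positive semidefinite. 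You make precisely that hypothesis explicit and use spectral decompositions instead, which is the right fix; your remark that some such assumption is unavoidable is confirmed by nilpotent counterexamples (e.g.\ $P=Q=\left(\begin{smallmatrix}0&1\\0&0\end{smallmatrix}\right)$ gives $\sigma_{\max}(P\otimes\mI+\mI\otimes Q)=\sqrt{2}\ne 2$), and the restriction is harmless for the paper because the lemma is only ever applied to $B^\top B$ and $\gamma^2W^2$, which are symmetric PSD. You also spell out the argument for the second identity that the paper merely asserts: every nonzero eigenvalue $\lambda_i(P)+\lambda_j(Q)$ is bounded below by $\min\braces{\sigma_{\min}^+(P),\sigma_{\min}^+(Q)}$, and the hypothesis $\sigma_{\min}(P)=\sigma_{\min}(Q)=0$ is exactly what makes this bound attained, by pairing the minimal nonzero eigenvalue of one factor with a zero eigenvalue of the other. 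So your writeup can be read as a corrected and completed version of the paper's argument rather than a mere replication of it.
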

\begin{proof}
Consider decompositions $P = U_P\Sigma_P V_P^\top$ and $Q = U_Q\Sigma_Q V_Q^\top$, where $U_P, V_P, U_Q, V_Q$ are orthogonal matrices and $\Sigma_P$ and $\Sigma_Q$ are diagonal matrices with corresponding eigenvalues at the diagonal. We have
\begin{align*}
    (U_P^\top\otimes U_Q^\top)(P\otimes\mI + \mI\otimes Q)(V_P\otimes V_Q) = \Sigma_P\otimes\mI + \mI\otimes\Sigma_Q.
\end{align*}
Denote singular values of $P$ as $\alpha_1, \ldots, \alpha_n$ and the singular values of $Q$ as $\beta_1, \ldots, \beta_m$. Singular values of $P\otimes\mI + \mI\otimes Q$ have form
\begin{align*}
    \lambda(\alpha_i, \beta_j) = \alpha_i + \beta_j,~ i = 1, \ldots, n,~ j = 1, \ldots, m.
\end{align*}
Therefore, $\sigma_{\max}(P\otimes\mI + \mI\otimes Q) = \sigma_{\max}(P) + \sigma_{\max}(Q)$. For the minimal nonzero singular values we obtain
\begin{align*}
    \sigma_{\min}^+(P\otimes\mI + \mI\otimes Q) = \min\braces{\sigma_{\min}^+(P), \sigma_{\min}^+(Q)}.
\end{align*}
\end{proof}

\section{Problem Statement}\label{sec:problem_statement}

Consider minimization problem with affine constraints.
\begin{align}\label{eq:problem_initial}
    &\min_{x\in\R^d}~ \sum_{i=1}^m f_i(x)~~ \text{s.t. } Bx = 0.
\end{align}
We assume that each $f_i$ is held by a separate agent, and the agents can exchange information through some communication network. Each agent also locally holds affine optimization constraints $Bx = 0$, where $B\in\R^{p\times d}$. Further we assume that $\kernel B\ne\{0\}$, because otherwise the constraints $Bx = 0$ define a set consisting of only $\{0\}$, which is not an interesting case.

We make assumptions on the optimization objectives that are standard for optimization literature \cite{nesterov2004introduction}.
\begin{assumption}
    Each $f_i$ ($i = 1, \ldots, m$) is differentiable, $\mu$-strongly convex and $L$-smooth, i.e.
    \begin{align*}
        f(y) &\geq f(x) + \angles{\nabla f(x), y - x} + \frac{\mu}{2}\norm{y - x}_2^2, \\
        f(y) &\leq f(x) + \angles{\nabla f(x), y - x} + \frac{L}{2}\norm{y - x}_2^2.
    \end{align*}
\end{assumption}
The communication network is represented by an undirected connected graph $\cG = (\cV, \cE)$. The communication constraints are represented by a specific matrix $W$ associated with the graph $\cG$.
\begin{assumption}\label{assum:mixing_matrix}
\item
\begin{enumerate}
    \item ${W}$ is a symmetric positive semi-definite matrix.
	\item (Network compatibility) For all $i,j = 1,\dots,m$ it holds $[{ W}]_{ij} = 0$ if $(i, j)\notin \cE$ and $i\ne j$.
	\item (Kernel property) For any $v = [v_1,\ldots,v_m]^\top\in\R^m$, ${ W} v = 0$ if and only if $v_1 = \ldots = v_m$, i.e.  $\kernel { W} = \spn\braces{\one}$.
\end{enumerate}
\end{assumption}
An explicit example of a matrix that satisfies Assumption \ref{assum:mixing_matrix} is the Graph Laplacian $W\in \mathbb{R}^{m\times m}$:
\begin{align}\label{eq:laplacian}
[W]_{ij} \triangleq \begin{cases}
-1,  & \text{if } (i,j) \in E,\\
\text{deg}(i), &\text{if } i= j, \\
0,  & \text{otherwise.}
\end{cases}
\end{align}

Let us introduce $\bx = \col\cbraces{x_1\ldots x_m}$ and $\mW = W\otimes\mI$. According to Assumption \ref{assum:mixing_matrix}, communication constraints $x_1 = \ldots = x_m$ can be equivalently rewritten as $\mW\bx = 0$. Also introduce $\mB = \mI\otimes B$ and $F(\bx) = \sum_{i=1}^m f_i(x_i)$. That allows to rewrite problem \eqref{eq:problem_initial} in the following way.
\begin{align}\label{eq:problem_constraints}
    \min_{\bx\in\R^{md}}~ &F(\bx) \\
    \text{s.t. } &\mW\bx = 0,~ \mB\bx = 0. \nonumber
\end{align}
Reformulation \ref{eq:problem_constraints} admits implementation of optimization methods for affinely constrained minimization. The iterations of such methods become automatically decentralized in the following sense. Let the optimization algorithm use primal or dual oracle calls of the objective function and use multiplications by the matrices representing affine constraints. In the case of problem \eqref{eq:problem_constraints} the gradient $\nabla F(\bx) = \col\sbraces{\nabla f_1(x_1)\ldots \nabla f_m(x_m)}$ is computed locally on the nodes and stored in a distributed manner across the network. Multiplication by $\mB$ is also performed locally due to its definition (i.e. the $i$-th node computes $Bx_i$), and the multiplication by $\mW$ is performed in a decentralized manner due to the network compatibility property of $W$ (see Assumption \ref{assum:mixing_matrix}).

\section{Primal Approach}\label{sec:primal_approach}

In this section, we discuss the solution of problem \eqref{eq:problem_constraints} by an algorithm APDG \cite{kovalev2021accelerated} that only uses primal oracle calls. The algorithm is designed for saddle-point problems, so we reformulate \eqref{eq:problem_constraints} as a saddle-point problem.

We add dual multipliers for the constraints and get a saddle-point problem
\begin{align}\label{eq:saddle_point_problem}
    \min_{\bx\in\R^{md}} \max_{\bu\in\R^{mp}, \bv\in\R^{md}}~ F(\bx) + \angles{\bu, \mB\bx} + \gamma\angles{\bv, \mW\bx} = 
    F(\bx) + \angles{\begin{pmatrix} \bu \\ \bv \end{pmatrix}, \begin{pmatrix} \mB \\ \gamma\mW \end{pmatrix} \bx}.
\end{align}
\begin{algorithm}[H]
	\caption{APDG: Accelerated Primal-Dual Gradient Method}
	\label{apd:alg}
	\begin{algorithmic}[1]
		\STATE {\bf Input:} $\bx^0 \in \range \mA^\top, \by^0\in \range \mA,$ $\eta_x,\eta_y,\alpha_x, \beta_x,\beta_y>0$, $\tau_x,\tau_y,\sigma_x,\sigma_y \in (0,1]$, $\theta\in(0,1)$
		\STATE $\bx_f^0 = \bx^0$
		\STATE $\by_f^0 = \by^{-1} = \by^0$
		\FOR {$k=0,1,2,\ldots$}
		\STATE $\by_m^{k} = \by^{k} + \theta(\by^{k} - \by^{k-1})$\label{apd:line:y:m}
		\STATE $\bx_g^k = \tau_x \bx^k + (1-\tau_x)\bx_f^k$\label{apd:line:x:1}
		\STATE $\by_g^k = \tau_y \by^k + (1-\tau_y)\by_f^k$
		\STATE  $\bx^{k+1} = \bx^k + \eta_x\alpha_x(\bx_g^k - \bx^{k}) 
		- \eta_x\beta_x\mA^\top\mA \bx^k  - \eta_x\left(\nabla F(\bx_g^k) 
		+ \mA^\top \by_m^{k}\right)$\label{apd:line:x:2}
		\STATE $\by^{k+1} = \by^k  - \eta_y\beta_y\mA(\mA^\top \by^k 
		+ \nabla F(\bx_g^k)) +  \eta_y  \mA \bx^{k+1}$
		\STATE $\bx_f^{k+1} = \bx_g^k + \sigma_x(\bx^{k+1} - \bx^k)$\label{apd:line:x:3}
		\STATE $\by_f^{k+1} = \by_g^k + \sigma_y(\by^{k+1} - \by^k)$
		\ENDFOR
	\end{algorithmic}
\end{algorithm}
Denote $\mA = \begin{pmatrix} \mB \\ \gamma\mW \end{pmatrix}$. In order to get complexity bounds for APDG applied to problem \eqref{eq:saddle_point_problem}, we need to bound the spectrum of $\mA$. Note that $\mA^\top\mA = \mB^\top\mB + \gamma^2\mW^2 = \mI_m\otimes (B^\top B) + \gamma^2 W^2\otimes\mI_{d}$. By Lemma \ref{lem:kronecker_diagonalize} we have
\begin{align*}
\lambda_{\max}(\mA^\top\mA) &= \lambda_{\max}(B^\top B) + \gamma^2\lambda_{\max}^2(W), \\
\lambda_{\min}^+(\mA^\top\mA) &= \min\braces{\lambda_{\min}^+(B^\top B), \gamma^2(\lambda_{\min}^+(W))^2}.
\end{align*}
We can also compute the condition number of $\mA^\top\mA$:
\begin{align*}
    \chi(\mA^\top\mA) = \frac{\lambda_{\max}(\mA^\top\mA)}{\lambda_{\min}^+(\mA^\top\mA)} = \frac{\lambda_{\max}(B^\top B) + \gamma^2\lambda_{\max}^2(W)}{\min\braces{\lambda_{\min}^+(B^\top B), \gamma^2(\lambda_{\min}^+(W))^2}}.
\end{align*}
By accurately choosing factor $\gamma$, we can control the condition number $\chi(\mA^\top\mA)$. 
The minimal value of $\chi(\mA^\top\mA)$ is attained at $\gamma^2 = \frac{\lambda_{\min}^+(B^\top B)}{(\lambda_{\min}^+(W))^2}$ and equals $\chi(\mA^\top\mA) = \chi(B^\top B) + \chi^2(W)$.
Therefore, if we apply APDG directly to problem \eqref{eq:problem_constraints}, the complexity would be
\begin{align*}
    O\cbraces{\max\cbraces{\sqrt{\chi^2(W) + \chi(B^\top B)} \sqrt{\frac{L}{\mu}}, \chi^2(W) + \chi(B^\top B)} \log\frac{1}{\eps_\bx}}
\end{align*}
calls of $\nabla f_i(\cdot)$ at each node and communication rounds, with $\eps_\bx$ being the desired distance to the solution: $\|\bx^N -\bx^*\| \leq \eps_\bx$.
In the smooth, strongly convex case it is also the complexity for satisfying $F(\bx^N) - F(\bx) \leq \eps_F$ or $ \|\mA \bx^N\| \leq \eps_\mA$ (up to logarithmic dependencies on the problem parameters).
Indeed, from Lipschitz smoothness we have $F(\bx^N) - F(\bx) \leq L\eps^2_\bx / 2$ and 
$\|\mA \bx^N\| = \|\mA \bx^N -\mA \bx^*\| \leq \sigma_{max}(\mA)\eps_\bx$.
By that means, in the following inequalities $\eps$ can be replaced by any of $\eps_\bx,~ \eps_f, ~ \eps_\mA$.

The dependence on network parameters $W$ and affine constraints parameters $B$ can be enhanced by using Chebyshev acceleration \cite{scaman2017optimal}. Let us replace $W$ by a Chebyshev polynomial $P_{K}(W)$ such that it has degree $K = O\cbraces{\sqrt{\chi(W)}}$ and condition number $\chi(P_K(W)) = O(1)$. Multiplication by $P_K(W)$ is equivalent to making $K$ communication rounds. Analogically, let us replace $B^\top B$ by a Chebyshev polynomial $P_M(B^\top B)$ with degree $M = O\cbraces{\sqrt{\chi(B^\top B)}}$ and condition number $\chi\cbraces{P_M(B^\top B)} = O(1)$. As a result, we obtain
\begin{align*}
    &N = O\cbraces{\sqrt{\frac{L}{\mu}} \log\frac{1}{\varepsilon}} \text{ oracle calls at each node}, \\
    &O\cbraces{N\sqrt{\chi(W)}} \text{ communications}, \\
    &O\cbraces{N\sqrt{\chi(B^\top B)}} \text{ multiplications by $B$, $B^\top$ at each node}.
\end{align*}

\section{Globally Dual Approach}\label{sec:globally_dual_approach}

In this section, we describe an approach to solving \eqref{eq:problem_constraints} that is based on passing to the dual problem. We call this approach ``global'' since both constraints, that is, affine constraints $\mB\bx = 0$ and communication constraints $\mW\bx = 0$ are used in the dual reformulation.

Let $\gamma$ be a positive scalar and $\mA^\top = [\mB^\top~ \gamma\mW]$ and introduce dual function
\begin{align*}
	\Phi(\by) = \max_{\bx\in\R^{md}} \sbraces{-F(\bx) + \angles{\by, \mA\bx}} = F^*(\mA^\top\by).
\end{align*}
We have $\ds\nabla\Phi(\by) = \mA\nabla F^*(\mA^\top\by) = \mA\cdot\argmin_{\bx\in\R^{md}}\sbraces{-F(\bx) + \angles{\by, \mA\bx}}$. Note that multiplication by $\mA$ is performed in a distributed manner: indeed, it includes local multiplications by $B$ and a consensus round, which is a multiplication by $\mW$. Moreover, the $\argmin$ operation is computed locally, which is standard for decentralized optimization \cite{scaman2017optimal}. Finally, dual function $\Phi$ is $\frac{\lambda_{\max}(\mA^\top\mA)}{\mu}$-smooth on $\R^{m(p+d)}$ and $L_\Phi =\frac{\lambda_{\min}^+(\mA^\top\mA)}{L}$-strongly convex on $(\kernel\mA^\top)^\bot$. Solving dual problem
\begin{align*}
	\min_{\by\in\R^{m(p+d)}}~ \Phi(\by)
\end{align*}
by a fast gradient method (see i.e. accelerated Nesterov method in Section 2.2 of \cite{nesterov2004introduction}) until accuracy 
$\Phi(\by^N) - \Phi(\by) \leq \eps_\Phi$ requires 
$N = O\cbraces{\sqrt\frac{L}{\mu}{\sqrt{\chi(\mA^\top\mA)}}\log\frac{1}{\eps_\Phi}}$ iterations.

Following the same arguments as in Section \ref{sec:primal_approach}, we compute the condition number $\chi(\mA^\top\mA)$:
\begin{align*}
    \chi(\mA^\top\mA) = \frac{\lambda_{\max}(B^\top B) + \gamma^2\lambda_{\max}^2(W)}{\min\braces{\lambda_{\min}^+(B^\top B), \gamma^2(\lambda_{\min}^+(W)^2)}}.
\end{align*}
The minimal value of $\chi(\mA^\top\mA)$ is attained at $\gamma^2 = \frac{\lambda_{\min}^+(B^\top B)}{(\lambda_{\min}^+(W))^2}$ and equals $\chi(\mA^\top\mA) = \chi(B^\top B) + \chi^2(W)$.
Communication and computation complexities of fast dual method equal
\begin{align*}
	O\cbraces{\sqrt\frac{L}{\mu} \cbraces{\chi(B^\top B) + \chi^2(W)}^{\color{black} \frac12} \log\frac{1}{\eps_\Phi}}.
\end{align*}

To obtain desired complexity estimates for the algorithm to find the approximate solution $\bx^N$ satisfying
$F(\bx^N) - F(\bx) \leq \eps$ and 
$ \|\mA \bx^N\| \leq \eps$, we refer to the following properties of dual function (see, e.g. Theorem 5.2 from \cite{gorbunov2019optimal}):
\begin{align*}
    \|\nabla \Phi(\by) \| \leq \epsilon / R_\by 
    &\Rightarrow F(\bx(\by)) - F(\bx^*) \leq \epsilon, \\
    \|\nabla \Phi(\by)\| \leq \epsilon 
    &\Rightarrow \|\mA \bx(\by) \| \leq \epsilon,
\end{align*}
where $\|\by\| \leq 2 R_\by$, and 
$\bx(\by) = \argmin_{\bx\in\R^{md}}\sbraces{-F(\bx) + \angles{\by, \mA\bx}}$.
Combining it with $\Phi(\by^N) - \Phi(\by) \geq \|\Phi(\by^N)\|^2 / 2L_\Phi$, which is true for a smooth convex function, we justify substitution of $\eps_\Phi$ by $\eps$ in the complexity estimate.
This transition will only change the constant hidden by big-O notation (by the factor of two), and affect omitted logarithmic dependencies on the problem parameters.

To employ Chebyshev acceleration in this case we do substitution $\mA^\top \by \to \bp$.
In this variables accelerated Nesterov method turns into Algorithm \ref{alg:globally_dual}, where $\bx(\bq) = \nabla F^*(\bq) = \argmin\sbraces{-F(\bx) + \angles{\bq, x}}$:
\begin{algorithm}[H]
	\caption{Globally Dual Method}
	\label{alg:globally_dual}
	\begin{algorithmic}[1]
		\STATE {\bf Input:} $\bp^0 \in \range \mA^\top$, $\eta>0$, $\beta \in (0, 1)$
		\STATE $\bp^{-1} = \bp^0$
		\FOR {$k=0,1,2,\ldots$}
		\STATE $\bq = \bp^k + \beta\cbraces{\bp^k - \bp^{k-1}}$
		\STATE $\bp^{k+1} = \bq - \eta \mA^\top \mA \bx(\bq)$
		\ENDFOR
	\end{algorithmic}
\end{algorithm}
For the algorithm in this form we can replace $\mA^\top \mA$ with Chebyshev polynomial of it, as we did in Section \ref{sec:primal_approach}, and obtain the same complexity estimates as for APDG:
\begin{align*}
    &N = O\cbraces{\sqrt{\frac{L}{\mu}} \log\frac{1}{\varepsilon}} \text{ oracle calls at each node}, \\
    &O\cbraces{N\sqrt{\chi(W)}} \text{ communications}, \\
    &O\cbraces{N\sqrt{\chi(B^\top B)}} \text{ multiplications by $B$, $B^\top$ at each node}.
\end{align*}

\section{Locally Dual approach}\label{sec:locally_dual_approach}

In Section \ref{sec:globally_dual_approach} we discussed a dual reformulation of \eqref{eq:problem_constraints} where both constraints $\mB\bx = 0$ and $\mW\bx = 0$ are used simultaneously. This section describes a dual approach, as well, but the difference is that we firstly pass to dual functions locally at the nodes and impose the communication constraints only afterwards.

\subsection{Utilizing locality on $\bu$}
One can note that in the above approaches optimization over $\bu$ could be done locally at each node.
This is equivalent to including affine constraints into the objective (as an indicator function) instead of handling them with Lagrangian multipliers.
In settings there the ``cost'' of communication is limiting or comparable to that of local computations, we can find the solution faster by going this way.
It may be the case when $x$ has a small dimension and decentralization is desirable due to privacy constraints.


Dual problem in this approach will be
\begin{equation*}
    \max_\bv \min_{\mB \bx=0} \{F(\bx) + \langle \bv, \mW \bx \rangle  \} = - \min_\bv F^*_{[\mB\bx=0]}(\mW^\top \bv),
\end{equation*}
where $\ds F^*_{[\mB\bx=0]}(\bv) = \max_{\mB \bx = 0} \{ \langle \bv , \bx \rangle - F(\bx)\}$~ denotes a convex conjugate under affine constraints.

We can reduce the problem of computing the gradient of such a modified conjugate function to calling conventional dual oracle.
Let $E$ be a matrix, the rows of which constitute an orthogonal basis in the null space of $B$ (matrix $E$ can be computed at the preprocessing stage of an optimization algorithm).
Then instead of working with functions $f_i(x)$ we can optimize the sum of functions $h_i(t) = f_i(Et)$.

Denote $\bt = \col(t_1, \ldots, t_m)$, $H(\bt) = \sum_{i=1}^m h_i(t_i)$. Then problem \eqref{eq:problem_initial} could be written in decentralized way as follows

\begin{align*}\label{eq:problem_t}
    \min_\bt &\sum_{i=1}^m h_i(t_i) \\
    \text{s.t. } &\mW_\bt\bt = 0.\\ 
\end{align*}

Its dual form is
\begin{align*}
\max_{\bt} \{ \langle \bz , \mW_\bt\bt \rangle - H(\bt)\}  = - \min_{\bz} H^*(\mW_\bt^\top \bz),
\end{align*}

and the gradient of the objective can be computed using Demyanov–Danskin's theorem:
\begin{align*}
\nabla H^*(\bz) = \arg\max_\bt \{ \langle \bz , \bt \rangle - F(\mE\bt)\} .
\end{align*}
From smaller dimension of $t$ comparing to $x$ we can expect that computation of $\nabla H^*(\bz)$ is easier than calling conventional first-order dual oracle, the only drawback is the necessity of storing matrix $E$ and performing multiplications by $E$.

Let $\mu_{t}$ and $L_{t}$ be the constants of strong convexity and Lipschitz smoothness of $h_i$ respectively for all $i=1,\ldots,m$.
Then, obviously, $\mu_t \geq \mu$ and $L_t \leq L$.
For example, if $f_i(x)$ is twice continuously differentiable, then its smoothness constant can be computed as $\ds L_{x, i} = \sup_{x \in \R^n}\lambda_{\max}(\nabla^2 f_i(x))$. The smoothness constant of $h_i(t)$ is given by 
$\ds L_{t, i} = \sup_{t \in \R^{d_t}}\lambda_{\max}(E^\top \nabla^2 f_i(Et) E)$. Note that the dimension of $t$ can be computed as $d_t = d - \text{rank}(B)$.
In the latter variant the maximum is taken over a smaller set of points, and multiplication by $E$ is likely to further reduce the smoothness constant (and increase strong convexity constant).


Since $H(\bt)$ is $L_t$-smooth and $\mu_t$-strongly convex, we have that $F^*_{[\mB\bx=0]}(\bz) = H^*(\bz)$ is $\frac1{\mu_t}$-smooth and $\frac{1}{L_t}$-strongly convex \cite{kakade2009on}.

Thus, the fast gradient method \cite{nesterov1983method} applied to the dual problem requires 
\begin{align*}
    O\left(\sqrt{\frac{L_t}{\mu_t}} \chi (W) \log \frac{1}{\eps} \right),
\end{align*}
dual-oracle calls and communication rounds to ensure
$F(\bx^N) - F(\bx) \leq \eps$ and 
$ \|\mA \bx^N\| \leq \eps$ (see Section \ref{sec:globally_dual_approach} for details).
And using Chebyshev acceleration as described in Section \ref{sec:primal_approach} we can reduce the complexities to

\begin{align*}
    &N = O\cbraces{\sqrt{\frac{L_t}{\mu_t}} \log\frac{1}{\varepsilon}} \text{ oracle calls at each node}, \\
    &O\cbraces{N \sqrt{\chi(W)}} \text{ communications}. \\
\end{align*}



\section{Numerical Experiments}\label{sec:numerical_exp}
In the simulation we consider the following smooth, strongly convex objective function:
\begin{align*}
f_i(x) &= \frac{1}{2}\|C_i x - d_i\|_2^2+\frac{\theta}{2}\|x\|_2^2,\\
F(\bx) &= \frac{1}{2}\|\mC \bx - \bd \|_2^2+\frac{\theta}{2}\|\bx\|_2^2,\\
\mC &= \text{diag}(C_1, \ldots, C_m),~ \bd = \col{(d_1, \ldots,  d_m)}.
\end{align*}
We consider different parameters of the problem such as the dimension of $x$, the rank of $B \in \R^{\dim(x) \times \dim(x)}$ and the number of nodes.
For each case we plot function error and constraints violation norm at each iteration for all our algorithms: APDG, Locally and Globally Dual approaches.
The Chebyshev acceleration is not applied in the experiments, so each iteration corresponds to one gradient computation (gradient of primal function in case of APDG, and gradient of dual function is case of dual approaches).
We also provide tables with comparison of time and number of iterations required to achieve given accuracy.
Time is measured with our Python/NumPy \cite{harris2020array} implementation of the algorithms, which is available on GitHub\footnote{Source code: \href{https://github.com/niquepolice/decentr_constr_dual}{https://github.com/niquepolice/decentr\_constr\_dual}}. 

\begin{enumerate}
\item For the first case we consider the ring network with $m=5$ nodes, $x \in \R^{40}$ and $\rank B = 1$.
Typical convergence plot is shown on Fig. \ref{fig:n5_d40}. 
One can see that all algorithms converge linearly, with the fastest one in terms of iterations number being Locally Dual, and the slowest one being APDG.
However, computing the gradient of a dual function might be an arithmetically more expensive operation than computing primal gradient in the black-box scenario.
In our implementation we compute the gradient of dual function by numerically solving the system of linear equations with its right-hand part being changed between iterations.
It means that one iteration of the Dual methods is more time-consuming than one iteration of APDG. 
In the Table \ref{tab:n5_d40}, we compare computational time and number of iterations required to achieve given accuracy. The results are averaged for 100 randomly generated problems. 


\begin{figure}
    \centering
    \includegraphics[width=0.9\textwidth]{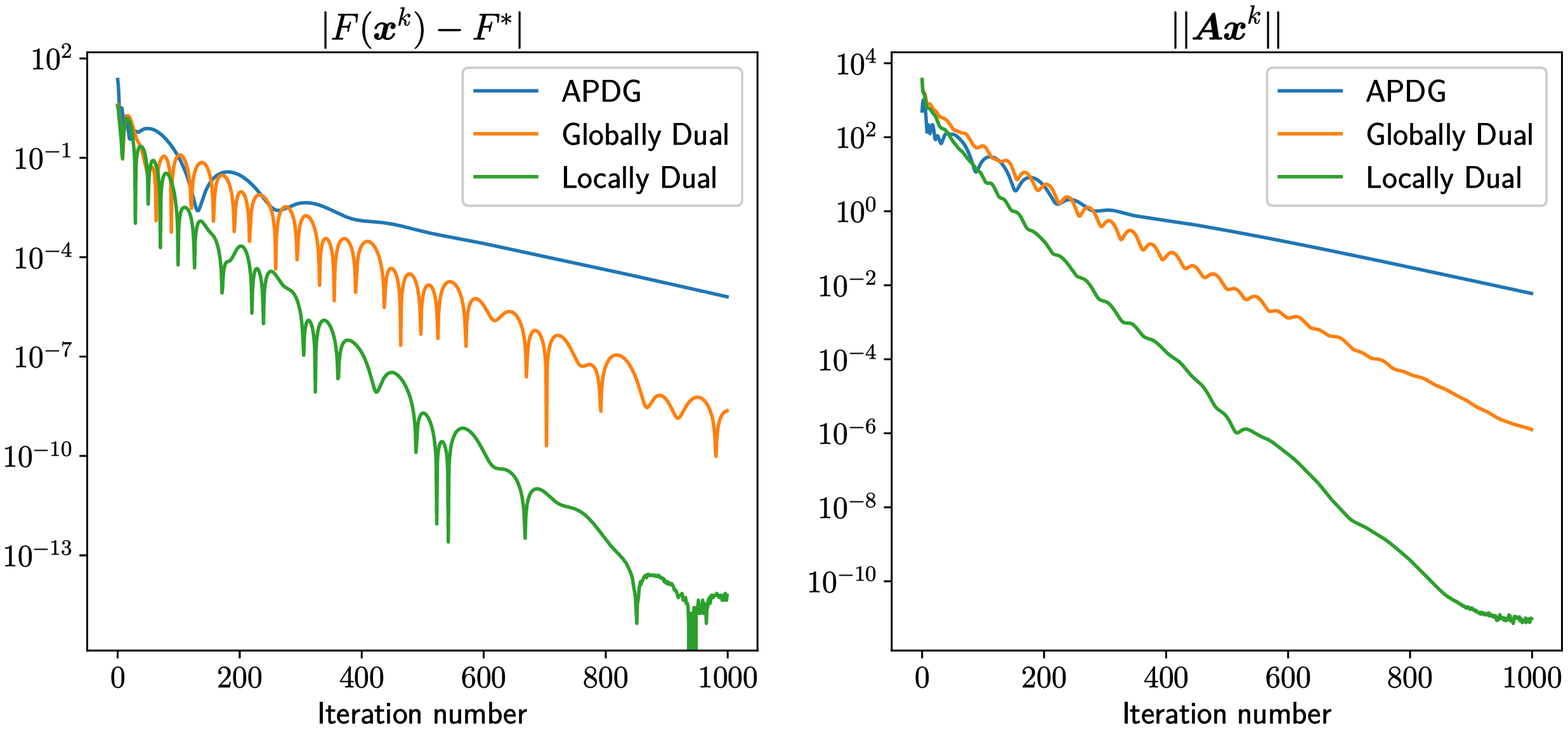}
    \caption{5 nodes, $\dim(x)=40$, $\rank B = 1$.}
    \label{fig:n5_d40}
\end{figure}

\begin{table}
\centering
\caption{Time and iterations for achieving  $\| \mA \bx^k \| < 10^{-2}$. Averaged over 100 experiments. Problem parameters: 5 nodes, $\dim(x)=40$, $\rank B = 1$.}
 \label{tab:n5_d40}
\begin{tabular}{ |c|c|c|c| } 
 \hline
 & APDG & Globally Dual & Locally Dual \\ 
 \hline
 Iterations & 875.3 & 502.7 & 276.7 \\ 
 Time (s) & 0.193 & 0.510 & 0.233 \\ 
 \hline
\end{tabular}
\end{table}


\item Next we use the same number of nodes and the dimension of $x$, but increase the rank of $B$.
Even for $\rank B = 3$ the condition number of the locally dual problem usually is about two orders of magnitude smaller than the condition number of the globally dual problem, therefore the globally dual approach  has a significant advantage in that case.
Typical convergence plots are shown in Figure \ref{fig:n5_d40_rk3}, averaged iteration and time complexities for satisfying stopping criteria  are shown in Table \ref{tab:n5_d40_rk3}.
\begin{figure}
    \centering
    \includegraphics[width=0.9\textwidth]{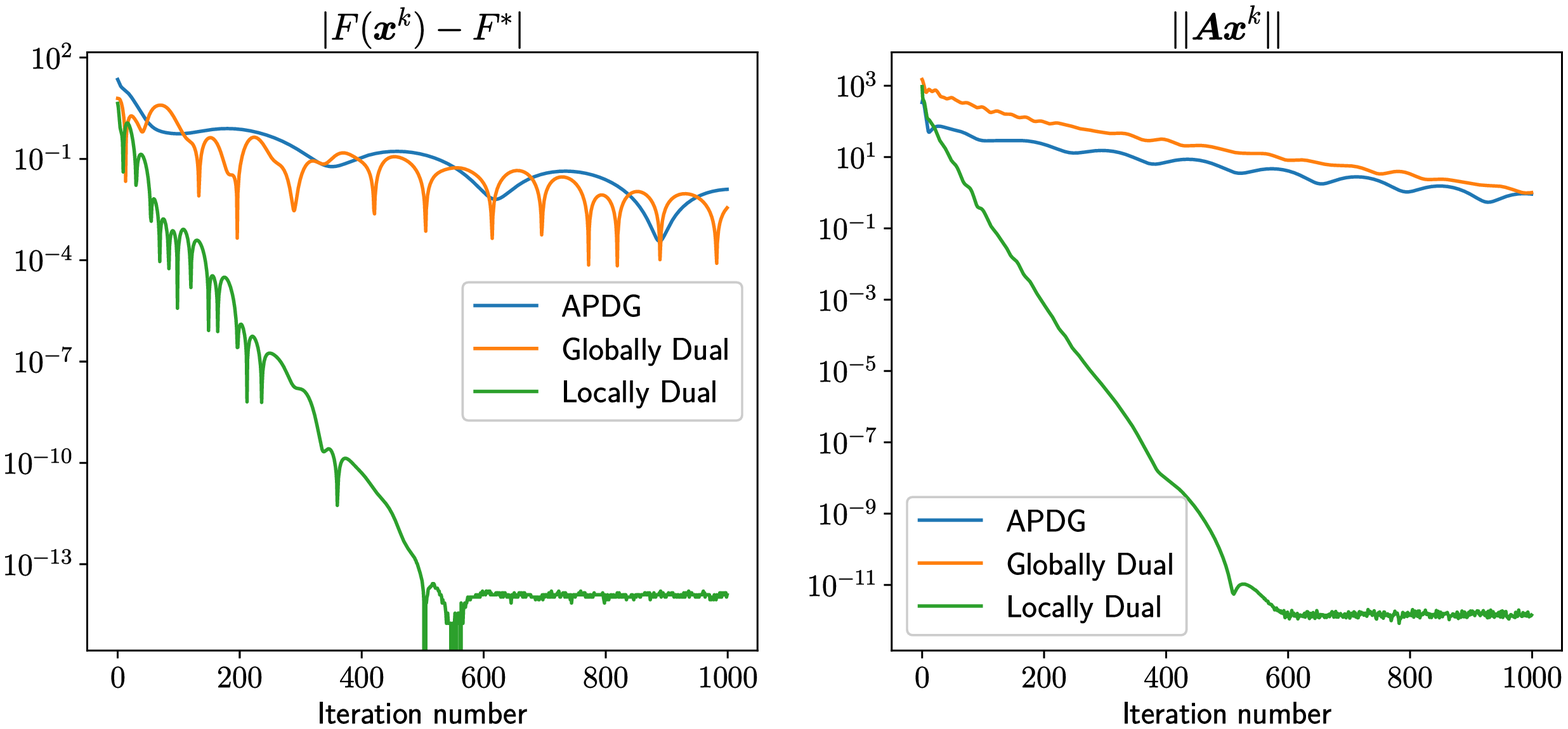}
    \caption{5 nodes, $\dim(x)=40$, $\rank B = 3$.}
    \label{fig:n5_d40_rk3}
\end{figure}

\begin{table}
\centering
\caption{Time and iterations for achieving  $\| \mA \bx^k \| < 10^{-1}$.
Averaged over 100 experiments.
Problem parameters: 5 nodes, $\dim(x)=40$, $\rank B = 3$.}
 \label{tab:n5_d40_rk3}
\begin{tabular}{ |c|c|c|c| } 
 \hline
 & APDG & Globally Dual & Locally Dual \\ 
 \hline
 Iterations & 1555.5 & 1551.7 & 123.1 \\ 
 Time (s) & 0.337 & 1.577 & 0.127 \\ 
 \hline
\end{tabular}
\end{table}

\item In the case of higher dimension ($10$ nodes, $\dim(x) = 100$, $\rank B = 1$) we used Erd\H{o}s-R\'enyi random communication graphs with edge probability = $0.3$.
APDG seems to converge much faster by constraints violation norm at first iterations then other methods (Fig. \ref{fig:n10_d100}), and its convergence rate is close to other methods.
See also Table \ref{tab:n10_d100} for averaged results of multiple experiments.

\begin{figure}
    \centering
    \includegraphics[width=0.9\textwidth]{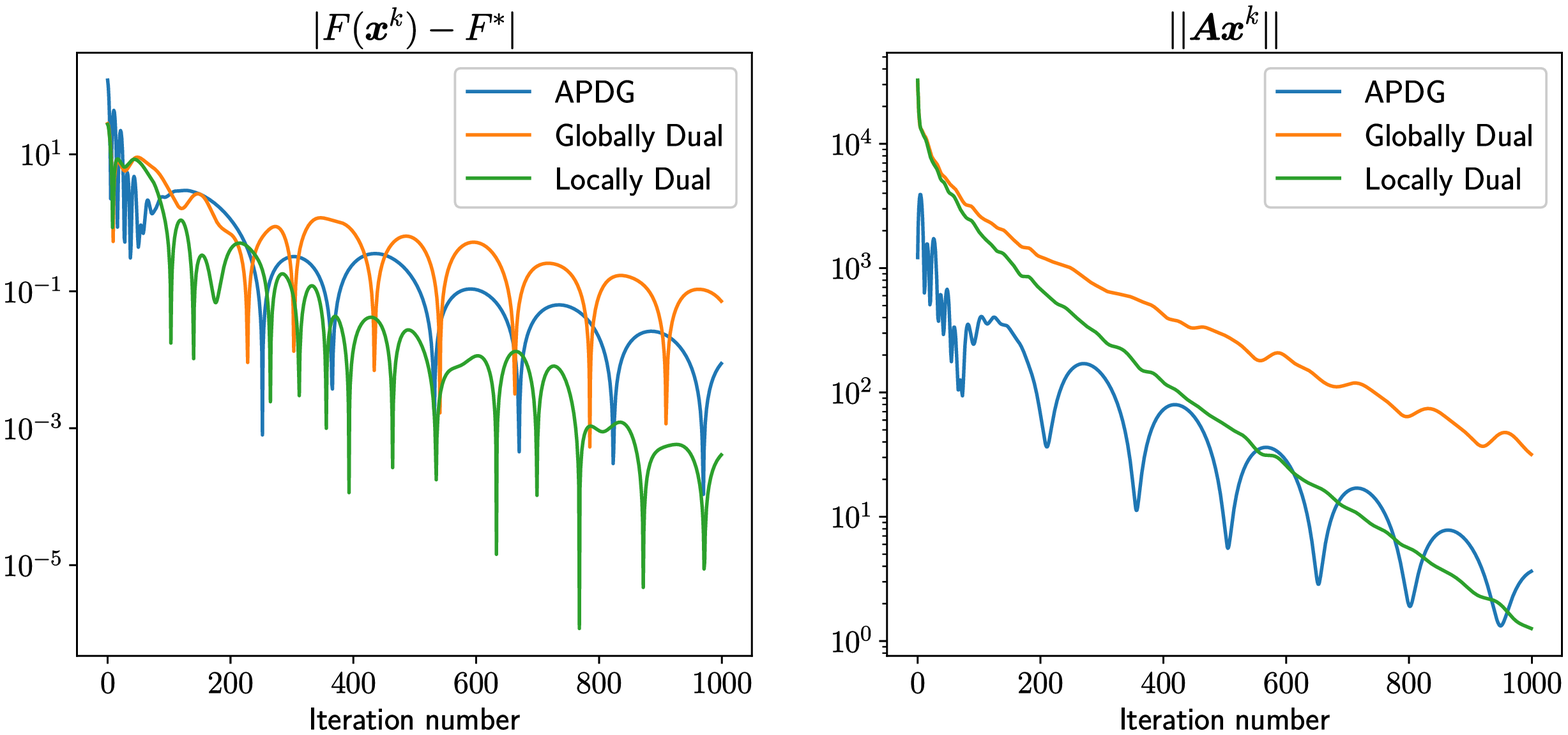}
    \caption{Erd\H{o}s-R\'enyi graph on 10 nodes, average degree = 3.6. $\dim(x)=100$, $\rank B = 1$.}
    \label{fig:n10_d100}
\end{figure}
 \begin{table}
\centering
\caption{Time and iterations for achieving accuracy $\| \mA \bx^k \| < 10^{1}$.
Averaged over 10 experiments. Problem parameters: 10 nodes, edge probability = $0.3$, $\dim(x)=100$, $\rank B = 1$.}
 \label{tab:n10_d100}
\begin{tabular}{ |c|c|c|c| } 
 \hline
 & APDG & Globally Dual & Locally Dual \\ 
 \hline
 Iterations & 404.3 & 2227.9 & 1425.5 \\ 
 Time (s) & 2.561 & 54.024 & 16.544 \\ 
 \hline
\end{tabular}
\end{table}
\end{enumerate}

\bibliographystyle{abbrv}
\bibliography{references}

\begin{thebibliography}{10}

\bibitem{erseghe2014distributed}
T.~{Erseghe}.
\newblock Distributed optimal power flow using admm.
\newblock {\em IEEE Transactions on Power Systems}, 29(5):2370--2380, Sep.
  2014.

\bibitem{dc2}
A.~Falsone, K.~Margellos, S.~Garatti, and M.~Prandini.
\newblock Dual decomposition for multi-agent distributed optimization with
  coupling constraints.
\newblock {\em Automatica}, 84:149--158, 2017.

\bibitem{admm1}
A.~Falsone, I.~Notarnicola, G.~Notarstefano, and M.~Prandini.
\newblock Tracking-admm for distributed constraint-coupled optimization.
\newblock {\em Automatica}, 117:1--13, 202.

\bibitem{gorbunov2019optimal}
E.~Gorbunov, D.~Dvinskikh, and A.~Gasnikov.
\newblock Optimal decentralized distributed algorithms for stochastic convex
  optimization.
\newblock {\em arXiv:1911.07363}, 2019.

\bibitem{harris2020array}
C.~R. Harris, K.~J. Millman, S.~J. van~der Walt, R.~Gommers, P.~Virtanen,
  D.~Cournapeau, E.~Wieser, J.~Taylor, S.~Berg, N.~J. Smith, R.~Kern, M.~Picus,
  S.~Hoyer, M.~H. van Kerkwijk, M.~Brett, A.~Haldane, J.~F. del R{\'{i}}o,
  M.~Wiebe, P.~Peterson, P.~G{\'{e}}rard-Marchant, K.~Sheppard, T.~Reddy,
  W.~Weckesser, H.~Abbasi, C.~Gohlke, and T.~E. Oliphant.
\newblock Array programming with {NumPy}.
\newblock {\em Nature}, 585(7825):357--362, Sept. 2020.

\bibitem{kakade2009on}
S.~M. Kakade, S.~Shalev-Shwartz, and A.~Tewari.
\newblock On the duality of strong convexity and strong smoothness: learning
  applications and matrix regularization.
\newblock Technical report, Toyota Technological Institute, 2009.

\bibitem{kovalev2021accelerated}
D.~Kovalev, A.~Gasnikov, and P.~Richt{\'a}rik.
\newblock Accelerated primal-dual gradient method for smooth and convex-concave
  saddle-point problems with bilinear coupling.
\newblock {\em arXiv preprint arXiv:2112.15199}, 2021.

\bibitem{kovalev2020optimal}
D.~Kovalev, A.~Salim, and P.~Richt{\'a}rik.
\newblock Optimal and practical algorithms for smooth and strongly convex
  decentralized optimization.
\newblock {\em Advances in Neural Information Processing Systems},
  33:18342--18352, 2020.

\bibitem{pdc2}
S.~Liang, L.~Y. Wang, and G.~Yin.
\newblock Distributed smooth convex optimization with coupled constraints.
\newblock {\em IEEE Transactions on Automatic Control}, 65:347--353, Jan 2020.

\bibitem{dc3}
S.~Liang, X.~Zheng, and Y.Hong.
\newblock Distributed ninsmooth optimization with coupled inequality
  constraints via modified lagrangian function.
\newblock {\em IEEE Transaction on Automatic Control}, 63:1753--1759, 2018.

\bibitem{molzahn2017survey}
D.~K. {Molzahn}, F.~{Dörfler}, H.~{Sandberg}, S.~H. {Low}, S.~{Chakrabarti},
  R.~{Baldick}, and J.~{Lavaei}.
\newblock A survey of distributed optimization and control algorithms for
  electric power systems.
\newblock {\em IEEE Transactions on Smart Grid}, 8(6):2941--2962, Nov 2017.

\bibitem{necoara2014distributed}
I.~Necoara and V.~Nedelcu.
\newblock Distributed dual gradient methods and error bound conditions.
\newblock {\em arXiv:1401.4398}, 2014.

\bibitem{NECOARA2015209}
I.~Necoara and V.~Nedelcu.
\newblock On linear convergence of a distributed dual gradient algorithm for
  linearly constrained separable convex problems.
\newblock {\em Automatica}, 55:209--216, 2015.

\bibitem{NECOARA2011756}
I.~Necoara, V.~Nedelcu, and I.~Dumitrache.
\newblock Parallel and distributed optimization methods for estimation and
  control in networks.
\newblock {\em Journal of Process Control}, 21(5):756--766, 2011.
\newblock Special Issue on Hierarchical and Distributed Model Predictive
  Control.

\bibitem{nesterov1983method}
Y.~Nesterov.
\newblock A method of solving a convex programming problem with convergence
  rate $o(1/k^2)$.
\newblock {\em Soviet Mathematics Doklady}, 27(2):372--376, 1983.

\bibitem{nesterov2004introduction}
Y.~Nesterov.
\newblock {\em Introductory Lectures on Convex Optimization: a basic course}.
\newblock Kluwer Academic Publishers, Massachusetts, 2004.

\bibitem{Patari2021}
N.~Patari, V.~Venkataramanan, A.~Srivastava, D.~K. Molzahn, N.~Li, and
  A.~Annaswamy.
\newblock Distributed optimization in distribution systems: Use cases,
  limitations, and research needs.
\newblock {\em IEEE Transactions on Power Systems}, pages 1--1, 2021.

\bibitem{rostampour2019distributed}
V.~Rostampour, O.~t. Haar, and T.~Keviczky.
\newblock Distributed stochastic reserve scheduling in ac power systems with
  uncertain generation.
\newblock {\em IEEE Transactions on Power Systems}, 34(2):1005--1020, 2019.

\bibitem{scaman2017optimal}
K.~Scaman, F.~Bach, S.~Bubeck, Y.~T. Lee, and L.~Massouli{\'e}.
\newblock Optimal algorithms for smooth and strongly convex distributed
  optimization in networks.
\newblock In D.~Precup and Y.~W. Teh, editors, {\em Proceedings of the 34th
  International Conference on Machine Learning}, volume~70 of {\em Proceedings
  of Machine Learning Research}, pages 3027--3036, International Convention
  Centre, Sydney, Australia, 06--11 Aug 2017. PMLR.

\bibitem{wang2022distributed}
J.~Wang and G.~Hu.
\newblock Distributed optimization with coupling constraints in multi-cluster
  networks based on dual proximal gradient method.
\newblock {\em arXiv preprint arXiv:2203.00956}, 2022.

\bibitem{admm2}
Z.~Wnag and C.~J. Ong.
\newblock Distributed model predictive control of linear descrete-times systems
  with local and global cosntraints.
\newblock {\em Automatica}, 81:184--195, 2017.

\bibitem{yarmoshik2022decentralized}
D.~Yarmoshik, A.~Rogozin, O.~Khamisov, P.~Dvurechensky, A.~Gasnikov, et~al.
\newblock Decentralized convex optimization under affine constraints for power
  systems control.
\newblock {\em arXiv preprint arXiv:2203.16686}, 2022.

\bibitem{pdc1}
D.~Yuan, D.~W.~C. Ho, and G.~P. Jiang.
\newblock An adaptive primal-dual subgradient algorithm for online distributed
  constrained optimization.
\newblock {\em IEEE Transactions on Cybernetics}, 48:3045--3055, 2018.

\end{thebibliography}

\end{document}